\numberwithin{equation}{section}
\theoremstyle{plain}
\newtheorem{theorem}{Theorem}[section]
\newtheorem{lemma}[theorem]{Lemma}
\newtheorem{question}[theorem]{Question} 
\newtheorem{remark}[theorem]{Remark}
\theoremstyle{definition}
\theoremstyle{remark}
\newtheorem{example}[theorem]{Example}
\definecolor{orange}{rgb}{1,0.5,0}
\newcommand\C{\mathbb C}         
\newcommand\R{\mathbb R}         
\newcommand\N{\mathbb N}         
\newcommand\Ha{\mathbb H}
\renewcommand\Im{\text{Im}}
      \newcommand{\eps}{\varepsilon}
\newcommand{\LandauO}{\mathcal{O}}
\newcommand{\dist}{\operatorname{dist}}
\newcommand{\B}{\mathbb B}
\newcommand{\supp}{\operatorname{supp}} 
\begin{document}
\parindent 0pt 

\setcounter{section}{0}

\title{Multiple SLE and the complex Burgers equation}
\author{Andrea del Monaco  \and
        Sebastian Schlei\ss inger\thanks{Supported by the ERC grant  ``HEVO - Holomorphic Evolution Equations'' n. 277691.}}
\date{\today}

\maketitle

\abstract{In this paper we ask whether one can take the limit of multiple SLE as the number of slits goes to infinity. In the special case of $n$ slits that connect $n$ points of the boundary to one fixed point, one can take the limit of the Loewner equation that describes the growth of those slits in a simultaneous way. In this case, the limit is a deterministic Loewner equation whose vector field is determined by a complex Burgers equation.}\\

{\bf Keywords:} stochastic Loewner evolution, multiple SLE, McKean-Vlasov equation, complex Burgers equation\\


\tableofcontents
\parindent 0pt

\section{Introduction}
The stochastic Loewner evolution (SLE), introduced by O. Schramm in 2000, provides a powerful model to describe certain two dimensional random curves that arise in different contexts in probability theory as well as in statistical physics. \\
For example, SLE can be used to describe the scaling limit of an interface curve of the critical Ising model with a certain boundary condition. A slightly different boundary condition produces several, pairwise disjoint interface curves and so it is a natural question to ask for a generalization of SLE to the case of $n\in\N$ random curves.
Several authors have discussed this generalization of SLE to \emph{multiple SLE}; see \cite{MR2004294},  \cite{MR2187598}, \cite{MR2358649}, \cite{Graham2007}, \cite{MR2310306}. An application to the critical Ising model can be found in \cite{MR2515494}.\\
 In this paper we touch the question what happens if $n\to\infty.$\\

To begin with, we fix some notations. We agree that $\Ha$ will denote the upper half-plane of the complex plane, that is $\Ha=\{z\in\C\,|\, \Im(z)>0\}$, and that $\kappa\in(0,4]$ will be a fixed parameter. Moreover, we let $\{N_l\}_{l\in\N}$ be a sequence of strictly increasing natural numbers.\\

For any $l\in\N$, we assume that there exist $N_l$ points $x_{1,l}<\ldots<x_{N_l,l}$ of $\partial\Ha\setminus\{\infty\}=\R$ such that the set $I_l:=\{x_{i,l}\,|\, i=1,...,N_l\}$ is bounded from either sides by constants that are independent of $l$. Thus we assume
\begin{equation}\label{inan}
\bigcup_{l\in\N} I_l \, \subset \, [-M,M]\, \quad \text{for some $M>0$.}
\end{equation}

Consider the set $\Delta(x_{1,l},\ldots,x_{N_l,l})$ of all $N_l$-tuples of curves $(\gamma_1,\ldots,\gamma_{N_l})$ such that $\gamma_k$ connects $x_{k,l}$ to $\infty$ through $\Ha$ and $\gamma_k \cap \gamma_m \cap \Ha=\emptyset$ whenever $k\not=m$. For each $l\in \N$ the theory of multiple SLE gives us a probability measure $\mu_{\Ha,\kappa}((x_{1,l},...,x_{N_l,l}),\infty)$ that is supported on $\Delta(x_{1,l},\ldots,x_{N_l,l})$. We describe this probability measure in more detail in Section \ref{multipleSLE}.\\
Now we can make sense of the limit $\lim_{l\to\infty}\mu_{\Ha,\kappa}((x_{1,l},...,x_{N_l,l}),\infty)$ in the following way:\\

The deterministic theory of multi-slits evolution allows us to describe the growth of any element of $\Delta(x_{1,l},\ldots,x_{N_l,l})$ by a Loewner equation with ``constant simultaneous growth'':

If $(\gamma_1,...,\gamma_{N_l})\in \Delta(x_{1,l},\ldots,x_{N_l,l}),$ then there exist parametrizations $\Gamma_1(t),\ldots,\Gamma_{N_l}(t)$ for the curves $\gamma_1,...,\gamma_{N_l}$ and a unique conformal mapping $g_t$ from $\Ha\setminus \bigcup_{k=1}^n \Gamma_k[0,t]$ onto $\Ha$ with Laurent expansion at $\infty$ given by
\begin{equation*}
g_t(z) = z+\frac{2t}{z}+\LandauO(|z|^{-2}), \quad z\to\infty, 
\end{equation*}
such that
\begin{equation}\label{multi}\dot{g}_t(z) = \sum_{k=1}^n\frac{2/n}{g_t(z)-V_k(t)}, \quad g_0(z)=z\in\Ha,\end{equation}
where the \emph{driving functions} $V_1,..., V_n$ are uniquely determined, continuous real-valued functions.\\

Now, if we describe the growth of the random curves $\gamma_1,\ldots,\gamma_{N_l}$ from $\mu_{\Ha,\kappa}((x_{1,l},...,x_{N_l,l}),\infty)$ by equation \eqref{multi}, then we can ask for the limit of the process, i.e. for fixed $t\geq0$ we consider the limit
$\lim_{l\to\infty}g^l_t.$\\

We need one further notation:\\
Let $\delta_{x_{k,l}}$ be the Dirac measure centered at $x_{k,l}$ and let $\mu^l$ be the probability measure defined as
\begin{equation}
\mu^l=\frac1{N_l}\sum_{k=1}^{N_l}\delta_{x_{l,k}}
\end{equation}
for any $l\in\N$. Namely, we are assigning to each point $x_{l,k}$ the mass $\frac1{N_l}$ and sum up the point measures.

\begin{theorem}\label{theorem1} Assume that there exists a probability measure $\mu$ such that%
\begin{equation} 
\mu^l \to  \mu \quad \text{weakly as} \quad l\to\infty \,.
\end{equation}
Then $g^l_t$ converges in distribution with respect to locally uniform convergence to the solution $g_t:D_t\to\Ha$ of the deterministic Loewner equation%
\begin{equation}\label{Loewner2}
\dot{g}_t(z) \, = \, M_t(g_t(z)) \,, \quad g_0(z)=z,
\end{equation}%
where $M_t(z)$ is given by the complex Burgers equation%
\begin{equation}\label{burgers2}
\frac{\partial M_t(z)}{\partial t}=-2 M_t(z)\frac{\partial M_t(z)}{\partial z}, \quad M_0(z)=\int_\R\frac{2\mu(du)}{z-u},
\end{equation} %
and it is such that
\begin{equation}
\label{limitbehaviour}\text{$M_t(z)\to 0$ locally uniformly on $\Ha$ for $t\to\infty$.}
\end{equation}

\end{theorem}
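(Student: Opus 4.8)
The plan is to pass to the limit in the finite-$n$ Loewner equation \eqref{multi}, controlling the vector field through the empirical measure of the driving functions rather than the curves themselves. The natural object is the time-dependent measure $\nu^l_t = \frac1{N_l}\sum_{k=1}^{N_l}\delta_{V_k(t)}$, with $\nu^l_0 = \mu^l$, so that the right-hand side of \eqref{multi} is exactly $G^l_t(z) := \int_\R \frac{2\,\nu^l_t(du)}{z-u}$, the Cauchy (Stieltjes) transform of $\nu^l_t$. First I would record the a priori bounds: by \eqref{inan} all the $x_{i,l}$ lie in $[-M,M]$, and a standard estimate for Loewner hulls shows that each $V_k(t)$ stays in a compact real interval $[-M',M']$ depending only on $M$ and the time horizon $T$ (the hulls have half-plane capacity $2t$, hence bounded diameter). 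Thus all $\nu^l_t$ are supported in a fixed compact set, so $\{\nu^l_t\}_l$ is tight for each $t$, and the maps $G^l_t$ are uniformly bounded and equicontinuous (hence normal) on compact subsets of $\Ha$ bounded away from $[-M',M']$.

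The second step is to identify the limit dynamics. For multiple SLE with all curves targeting $\infty$, the driving functions satisfy an SDE of the form $dV_k(t) = \sqrt{\tfrac{\kappa}{n}}\,dB^k_t + \big(\text{drift from }\partial_{x_k}\log Z + \text{interaction terms }\tfrac{2/n}{V_k - V_m}\big)\,dt$; the point of the $1/n$ normalization in \eqref{multi} is that the martingale part of $V_k$ has quadratic variation $O(1/n)$, so in the limit the randomness washes out and $\nu^l_t$ concentrates on a deterministic flow. Applying Itô's formula to $G^l_t(z) = \frac1{N_l}\sum_k \frac{2}{z - V_k(t)}$ and taking $l\to\infty$, the Brownian and the diagonal interaction terms are $O(1/N_l)$ and drop out, while the sum $\frac1{N_l}\sum_{k\ne m}\frac{2/N_l}{(z-V_k)(V_k - V_m)}$ is recognized, after symmetrization in $k,m$, as producing the term $-2\,G_t(z)\,\partial_z G_t(z)$. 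This yields that any subsequential limit $G_t(z) = \lim_l G^l_t(z)$ solves $\partial_t G_t = -2 G_t \partial_z G_t$ with $G_0(z) = \int_\R \frac{2\mu(du)}{z-u}$, i.e. $G_t \equiv M_t$ by uniqueness of the complex Burgers flow (solvable explicitly by characteristics, $M_t(z) = M_0(z - 2t M_t(z))$, which also gives existence and the domain $D_t$ of \eqref{Loewner2}). Since the limit is deterministic and unique, the full family $g^l_t$ — obtained by integrating \eqref{multi} with $G^l_t$ in place of the sum — converges in distribution, locally uniformly, to the solution $g_t$ of \eqref{Loewner2}.

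For the long-time behavior \eqref{limitbehaviour}, I would argue directly from the characteristic solution. Because $\mu$ is a probability measure on $\R$, $M_0(z) = \int_\R \frac{2}{z-u}\,\mu(du)$ satisfies $|M_0(z)| \le 2/\dist(z,\supp\mu) \le 2/\Im(z)$, and more importantly $z M_0(z) \to 2$ as $z\to\infty$. Tracking the characteristic through $z \mapsto z - 2t M_t(z)$ and using that $M_t$ again has total "mass" $2$ at infinity but is the Stieltjes transform of the push-forward $\mu_t$ of $\mu$ under the flow, one sees that the support of $\mu_t$ spreads out to infinity — heuristically each atom moves outward at speed comparable to its own Cauchy field — so that $\dist(z,\supp\mu_t)\to\infty$ for fixed $z\in\Ha$, forcing $M_t(z)\to 0$. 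Making this quantitative is the step I expect to be the main obstacle: one must show the flow genuinely disperses the mass rather than, say, collapsing it, which requires a monotonicity or energy estimate for the Burgers flow (for instance controlling $\Im M_t(z)$, which decreases along the flow, or bounding the inverse functions $g_t^{-1}$ from the capacity growth $\mathrm{hcap}(\text{hull}) = 2t \to \infty$). The capacity viewpoint is probably the cleanest: since $g_t$ has expansion $z + 2t/z + \cdots$, the complement of $D_t$ has half-plane capacity $2t\to\infty$ while staying "anchored" near $[-M,M]$, which pins down enough geometry to conclude $M_t = \dot g_t \circ g_t^{-1} \to 0$ on compacts.
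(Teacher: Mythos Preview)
Your argument for the convergence of $g^l_t$ is essentially the paper's: the paper invokes the Rogers--Shi result directly for the convergence of the empirical measures $\mu^l_t$ to the McKean--Vlasov solution $\mu_t$ (hence $G^l_t\to M_t$ solving Burgers), and then a Loewner continuity statement to pass from the measures to the maps. You are sketching the Rogers--Shi proof (It\^o on the Stieltjes transform, symmetrization to recognize $-2M_t\partial_z M_t$, vanishing noise at scale $1/N_l$) rather than citing it, which is fine.

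The long-time part, however, has a genuine gap. Your proposed mechanism, ``the support of $\mu_t$ spreads out so that $\dist(z,\supp\mu_t)\to\infty$ for fixed $z\in\Ha$'', is false: in the model case $\mu=\delta_0$ one has $\supp\mu_t=[-4\sqrt t,4\sqrt t]$, so for any fixed $z\in\Ha$ the distance eventually equals $\Im(z)$ and stays there. Likewise the capacity alternative does not work as stated, since the hulls $K_t$ are not anchored near $[-M,M]$ (in the same example $\overline{K_t}\cap\R=[-2\sqrt{et},2\sqrt{et}]$). The correct argument uses the very characteristic relation you already wrote down, but read the other way. Set $z_0(t):=z-2tM_t(z)$, so that $M_t(z)=M_0(z_0(t))$ and $z=z_0(t)+2tM_0(z_0(t))$. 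Since $\Im M_0<0$ on $\Ha$, this forces $\Im z_0(t)>\Im z$, hence $|M_0(z_0(t))|\le 2/\Im z$ uniformly in $t$. If $z_0(t)$ had a bounded subsequence, the relation $z=z_0(t)+2tM_0(z_0(t))$ would fail for large $t$ along it; so $|z_0(t)|\to\infty$ and $M_t(z)=M_0(z_0(t))\to 0$. Locally uniform convergence then follows from the uniform bound $|M_t(z)|\le 2/\Im z$ via Vitali--Porter. This is short and completely avoids any dispersion or energy estimate for $\mu_t$.
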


Furthermore,

\begin{theorem}\label{form1}
The set $K_t=\Ha\setminus D_t$ is bounded for every $t\geq 0$ and there exists $T>0$ such that for every $t> T$, the boundary $\partial K_t\cap\Ha$ is an analytic curve in $\Ha$.
\end{theorem}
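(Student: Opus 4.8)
The plan is to read off every qualitative feature of $K_t$ from the explicit characteristic solution of \eqref{burgers2}, and then transport this information through the Loewner flow \eqref{Loewner2}.

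\emph{Step 1: identifying the driving measure.} Solving \eqref{burgers2} by characteristics yields the implicit relation $M_t(z)=M_0\big(z-2t\,M_t(z)\big)$. Writing $M_t(z)=\int_\R\frac{2\,d\mu_t(u)}{z-u}=2G_{\mu_t}(z)$ for a probability measure $\mu_t$ (so $\mu_0=\mu$), this reads $G_{\mu_t}(z)=G_\mu\big(z-4t\,G_{\mu_t}(z)\big)$, which is precisely the subordination identity for free convolution with a semicircle: $\mu_t=\mu\boxplus\sigma_{4t}$, where $\sigma_s$ is the semicircle law of variance $s$. From this I get, for every $t>0$, that $\mathrm{supp}\,\mu_t$ is compact (contained in $[-M,M]+[-4\sqrt t,4\sqrt t]$ by \eqref{inan}), that $\mu_t$ has a density $\rho_t$ which is real-analytic in the interior of its support, and — by Biane's analysis of the free semicircular flow — that there is $T>0$ such that for $t>T$ the support is a single compact interval $[a(t),b(t)]$ with $a(\cdot),b(\cdot)$ real-analytic and $\rho_t$ vanishing like a square root at the two edges; equivalently, $M_t$ extends holomorphically to $\hat\C\setminus[a(t),b(t)]$ with simple branch points at $a(t),b(t)$. (If one prefers to avoid citing Biane, the same structural facts can be obtained directly from the map $h_t(w)=w+2t\,M_0(w)$, whose simple critical points — for $t$ large there are exactly two, both real — are the sources of the square-root branch points of $M_t$.)

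\emph{Step 2: boundedness of $K_t$.} Fix $t$ and put $L=M+4\sqrt t$, so $\mathrm{supp}\,\mu_s\subseteq[-L,L]$ for all $s\le t$ and hence $|M_s(w)|\le 2/(|w|-L)$ for $|w|>L$. If $|z|>2L+2t$, the orbit $s\mapsto g_s(z)$ of \eqref{Loewner2} satisfies $|g_s(z)|\ge|z|-2s>2L$ on $[0,t]$, so it never approaches a singularity of $M_s$ and $z\in D_t$. Therefore $K_t\subseteq\{|w|\le 2L+2t\}$, which proves the first assertion. This part is routine.

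\emph{Step 3: analyticity of $\partial K_t\cap\Ha$ for $t>T$.} Write $[a,b]=[a(t),b(t)]$ and, since $K_t$ is bounded, $K_t\cap\R=[\alpha,\beta]$. As $g_t$ is the conformal map of $D_t=\Ha\setminus K_t$ onto $\Ha$ with the hydrodynamic normalization, it extends continuously to $\R\setminus(\alpha,\beta)$ with real values; Schwarz reflection across that set produces a conformal map $g_t\colon\hat\C\setminus\big(K_t\cup\overline{K_t}\big)\to\hat\C\setminus[a,b]$ with inverse $f_t$ satisfying $f_t(z)=z-2t/z+\LandauO(|z|^{-2})$ at $\infty$. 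The boundary correspondence then identifies $\partial K_t\cap\Ha$ with the image under $f_t$ of the upper edge of the cut, $\partial K_t\cap\Ha=\{f_t(y+i0):y\in(a,b)\}$, a simple curve because $f_t$ is injective; so it suffices to prove that $y\mapsto f_t(y+i0)$ is real-analytic on $(a,b)$ with non-vanishing derivative. For fixed $y\in(a,b)$ the point $p=f_t(y+i0)\in\partial K_t\cap\Ha$ is absorbed exactly at time $t$, so its orbit $g_s(p)$, $s\in[0,t)$, stays in the open half-plane where every $M_s$ is genuinely holomorphic, and $g_s(p)\to y$ as $s\to t^-$. Reading this orbit backward, $(s,y)\mapsto g_{t-s}(f_t(y+i0))$ solves a Loewner-type ODE driven by $-M_{t-s}$, with the only ``boundary'' input being the initial kick $-M_t(y+i0)$; since $x\mapsto M_\tau(x+i0)$ is real-analytic on $(a(\tau),b(\tau))$ (from the analyticity of $\rho_\tau$) and $y\in(a(\tau),b(\tau))$ for $\tau$ near $t$, analytic dependence of ODE solutions on initial data and parameters should yield real-analyticity of $y\mapsto f_t(y+i0)$ on every compact subinterval of $(a,b)$; non-vanishing of the derivative then follows from the conformality of $g_t$ at the corresponding regular boundary point.

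I expect Step 3 to be the genuine obstacle. Making the ``transport of regularity through the flow'' rigorous requires: tracking the time $s_0(y)<t$ at which $f_t(y+i0)$ leaves the real axis (i.e. at which $y$ enters $\mathrm{supp}\,\mu_s$); controlling the flow uniformly for $s$ bounded away from $s_0(y)$; and handling the square-root behaviour of $M_t$ at the edges $a(t),b(t)$, where $g_t$ is modeled on $\sqrt{\cdot}$ rather than on a conformal map. If Biane's regularity results are not invoked, the other place where real work is needed is establishing, directly from the properties of the Cauchy transform $M_0$ of the compactly supported measure $2\mu$, that $\mathrm{supp}\,\mu_t$ is a single interval with a square-root edge profile for all sufficiently large $t$.
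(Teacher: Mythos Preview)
Your route is genuinely different from the paper's. You identify $\mu_t=\mu\boxplus\sigma_{4t}$ and import Biane's regularity theory for the free semicircular flow; the paper never mentions free probability and works entirely with the characteristics of the Burgers equation. The map $h_t(w)=w+2tM_0(w)$ that you mention only parenthetically is in fact the paper's central object: since $t\mapsto z_0+2tM_0(z_0)$ solves $\dot z=2M_t(z)$ with $M_t(z(t))\equiv M_0(z_0)$, one has $M_t=M_0\circ h_t^{-1}$ on $h_t(\Ha)$. From the \emph{real} characteristics $x_0\mapsto x_0+2tM_0(x_0)$, $x_0\notin\supp\mu_0$, the paper reads off (i) boundedness of $\supp\mu_t$ --- your Step~2 via a crude bound on $|M_s|$ is actually simpler than this --- and (ii) that every gap of $\supp\mu_0$ closes by the finite time $T=\sup_{x_0}\bigl(-1/(2M_0'(x_0))\bigr)<\infty$, since $M_0'<0$ on $\R\setminus\supp\mu_0$ and is bounded below on the bounded gaps. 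What Biane buys you is a one-line citation for (ii) and the density regularity; what the characteristic method buys the paper is a self-contained argument and, crucially, the formula $M_t=M_0\circ h_t^{-1}$.

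That formula is exactly what fills the gap you flag in Step~3. It gives a \emph{holomorphic} extension of $M_t$ through the interior of $\supp\mu_t$ into the lower half-plane, because $h_t^{-1}$ sends the upper edge of the cut back into $\Ha$ where $M_0$ is holomorphic. Your argument only invokes real-analyticity of $x\mapsto M_\tau(x+i0)$, which by itself is not enough to run analytic dependence for the Loewner ODE at the instant the orbit touches $\R$; you would still need to argue that this boundary trace is the restriction of a two-sided holomorphic extension. With the holomorphic extension in hand the rest is immediate: if $z_0\in\partial K_{t_0}\cap\Ha$ and $g_t(z_0)\to y$ in the interior of $\supp\mu_{t_0}$, then the vector field $(t,w)\mapsto M_t(w)$ is holomorphic near $(t_0,y)$, so $g_{t_0}$ extends holomorphically past $z_0$ and $\partial K_{t_0}\cap\Ha=g_{t_0}^{-1}(\R)$ is analytic there. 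A separate comparison of the Loewner orbit $\dot x=M_t(x)$ against the faster characteristic $\dot y=2M_t(y)$ shows that real initial data outside $\supp\mu_0$ are absorbed in finite time, identifying the edges of the cut with real preimages under $g_t$. This sidesteps all three difficulties you list: no need to track $s_0(y)$, no uniform control issues, and no square-root edge analysis.
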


We postpone the proofs of the above results to Section \ref{proof}. In Section \ref{multipleSLE} we will recall the definition of multiple SLE as it was introduced in \cite{MR2310306}, whereas in Section \ref{outlook} we will discuss the example $\mu=\delta_0$ and some further questions.

\section{Multiple SLE}\label{multipleSLE}
In what follows, $\kappa$ is a fixed parameter in $(0,4]$ and $D$ is a Jordan domain of the complex plane $\C$.

\subsection{One-slit SLE}
Fix two points $x,y \in \partial D$ and assume that $\partial D$ is analytic in neighbourhoods of $x$ and $y$.\\
The chordal stochastic Loewner evolution for the data $D,x,y,\kappa$ can be viewed as a certain probability measure $\mu_{D,\kappa}(x,y)$ on the space of all simple curves connecting $x$ to $y$ within $D.$ As one property of SLE is conformal invariance, it suffices to describe SLE when
\begin{equation}
D=\Ha,\quad x=0, \quad \text{and} \quad y=\infty\,.
\end{equation}
In such a case, a random curve $\gamma\in\Delta(0)$ can be efficiently described as follows.\\
Assume $\gamma(t)$ is parametrized by half plane capacity $2t$, i.e. $\gamma(0)=0$ and the conformal mapping $g_t$ from $\Ha\setminus \gamma[0,t]$ onto $\Ha$ with $g_t(z)-z\to0$ for $z\to\infty$ has the expansion 

$$ g_t(z) = z+\frac{2t}{z}+\LandauO(|z|^{-2}) \quad \text{for}\quad z\to\infty.$$

Then $g_t$ satisfies the Loewner equation
\begin{equation}\label{loewner1}
\frac{dg_t(z)}{dt} = \frac{2}{g_t(z) - \sqrt{\kappa}B_t}, \quad g_0(z)=z,
\end{equation}
where $B_t$ is a standard one-dimensional Brownian motion. Of course, one may also consider SLE for $\kappa>4$. But then the measure is no longer supported on $\Delta(0)$ and we are not interested in such a case here.

\begin{remark} For fixed $z\in\Ha$, the solution to  Loewner equations such as \eqref{loewner1} and \eqref{multi} may have a finite lifetime $T(z)>0$ in the sense that $g_t(z)\in\Ha$ for all $t<T(z)$ but $\lim_{t\to T(z)}\Im(g_t(z))=0.$ \\
If we fix a time $t>0$ and let $K_t=\{z\in\Ha \,|\, T(z)\leq t\},$ then $g_t(z)$ maps the domain $\Ha\setminus K_t$ conformally onto $\Ha.$
For more information on the Loewner equation and on SLE, see~\cite{Lawler:2005}.
\end{remark}

\subsection{Multiple SLE}\label{sec-multiple}
In the following we describe multiple SLE as it was introduced in \cite{MR2310306}.\\

Let $N\in\N$ and fix $2N$ points $p_1,...,p_{2N}\in \partial D$ in counter-clockwise order. Assume that $\partial D$ is analytic in a neighbourhood of each $p_k.$\\
We call the pair $(\mathbf{x}, \mathbf{y})$ of two vectors $\mathbf{x}=(x_1,...,x_N),\mathbf{y}=(y_1,...,y_N)$ a \emph{configuration} for these points if 
\begin{itemize}
\item[a)] $\{x_1,...,x_N,y_1,...,y_N\}=\{p_1,...,p_{2N}\}$,
\item[b)] there exist $N$ pairwise disjoint curves $\gamma_1,...,\gamma_n$ in
$D$ such that $\gamma_k$ connects $x_k$ with $y_k,$
\item[c)]  $x_1=p_1$ and $x_1,x_2,...,x_N,$ as well as $x_1,x_k,y_k$, for every $k\geq 2,$ are in counter-clockwise order.

\end{itemize}
The points in $\mathbf{x}$ can be thought of as starting points of these curves. Then $\mathbf{y}$ represents the end points and the assumption in c) just prevents us from getting a new configuration by exchanging a starting point of one curve with its endpoint.
A simple combinatorial exercise shows that there exist 
$$C_N = \frac{(2N)!}{(N+1)!\,N!}$$

many configurations for $2N$ points.\\
Now fix a configuration $(\mathbf{x},\mathbf{y})$. \\
\emph{Configurational multiple SLE} $Q_{D,\kappa}(\mathbf{x},\mathbf{y})$ is a positive, finite measure on the space of all $N-$tuples $(\gamma_1,...,\gamma_N)$ where $\gamma_k$ is a simple curve in $D$ connecting $x_k$ and $y_k$ and $\gamma_k\cap\gamma_j=\emptyset$ whenever $j\not=k.$\\
If we let $H_{D,\kappa}(\mathbf{x},\mathbf{y})$ be the mass of $Q_{D,\kappa}(\mathbf{x},\mathbf{y}),$ then we can write $$Q_{D,\kappa}(\mathbf{x},\mathbf{y})=H_{D,\kappa}(\mathbf{x},\mathbf{y})\cdot \mu_{D,\kappa}(\mathbf{x},\mathbf{y})$$ where $\mu_{D,\kappa}(\mathbf{x},\mathbf{y})$ is some probability measure.\\
The measure $Q_{D,\kappa}(\mathbf{x},\mathbf{y})$ has the following four fundamental properties (see \cite[Section 3.2]{MR2310306} for its construction):
\begin{itemize}
\item[(a)] Conformal covariance: If $f:D\to E$ is a conformal mapping such that $\partial E$ is analytic in a neighbourhood of $f(p_k)$ for every $k,$ then 
$$ f\circ Q_{D,\kappa}(\mathbf{x}, \mathbf{y}) = |f'(\mathbf{x})|^b|f'(\mathbf{y})|^b Q_{E,\kappa}(f(\mathbf{x}, \mathbf{y})), \quad \text{with $b=\frac{6-\kappa}{2\kappa}$}.$$
\item[(b)] The case $N=1:$ $\mu_{\Ha,\kappa}(0,\infty)$ is the chordal SLE($\kappa$) probability measure and $H_{\Ha, \kappa}(0,\infty)=1.$
\item[(c)] Cascade relation (\cite[Proposition 3.2]{MR2310306})
\item[(d)] Boundary Perturbation (\cite[Proposition 3.3]{MR2310306})
\end{itemize}

So $Q_{D,\kappa}(\mathbf{x},\mathbf{y})$ can be thought of as a probability measure with a weight for the underlying configuration. These weights serve as partition functions to combine multiple SLE for different configurations.\\
Indeed, if $S:=\{(\mathbf{x}_1, \mathbf{y_1}), ..., (\mathbf{x}_l, \mathbf{y_l})\}$ is a set of $l$ configurations, then we can consider the new measure
\begin{equation}
Q_{D,\kappa}(S):=\sum_{k=1}^l Q_{D,\kappa}(\mathbf{x}_k,\mathbf{y}_k)=H_{D,\kappa}(S) \cdot \mu_{D,\kappa}(S),
\end{equation} 
where $H_{D,\kappa}(S)$ denotes again the mass of $Q_{D,\kappa}(S)$ and $\mu_{D,\kappa}(S)$ is a probability measure.
In the case $l=C_N$, we consider all possible configurations.
 \begin{example} Consider the case $N=2$ and $\kappa=3$. Then there are two possible configurations $C_1$ and $C_2$, and $\mu_{D,3}(\{C_1,C_2\})$ describes the scaling limit for the Ising model with corresponding boundary conditions (see~\cite{MR2515494}). The probability $p$ for obtaining configuration $C_1$ is given by $$p=\frac{H_{D,3}(C_1)}{H_{D,3}(C_1)+H_{D,3}(C_2)}.$$ \hfill $\bigstar$
\end{example}

Because of conformal invariance, it suffices again to consider the case $D=\Ha$ only, where $p_1,...,p_{2N}\in \R\cup{\{\infty\}}.$
The number $H_{\Ha, \kappa}(\mathbf{x}, \mathbf{y})$ is known explicitly only for some special cases:
\begin{itemize}
\item[(i)] For $N=1,$ we obtain from property (b) by using a  M\"obius transformation: $H_{\Ha,\kappa}(x,y)=|y-x|^{-2b}.$
\item[(ii)] $\kappa=2:$ $H_{\Ha, \kappa}(\mathbf{x}, \mathbf{y}) = |\det[(y_k-x_j)^{-2}]_{j,k}|$ (see~\cite{MR2310306}, the Remark after Proposition 3.3).
\item[(iii)] It can be expressed by a formula involving the hypergeometric function for $N=2,$ (see~\cite[Proposition 3.4]{MR2310306}).
\end{itemize}

Finally we notice that one may consider $Q_{\Ha,\kappa}(\mathbf{x}, \mathbf{y})$ also for a configuration where $y_j=y_k$ (or $x_j=x_k$,  
or both) for certain $j\not=k$. This is done by considering the disjoint case $y_j\not=y_k$ first and then taking a scaled limit.
We include the following case as a definition and refer to \cite[Section 4.6]{MR2187598}, and the references therein.

 \begin{equation}\label{infinity1} H_{\Ha,\kappa}((x_1,...,x_N), \infty) := H_{\Ha,\kappa}((x_1,...,x_N), (\infty,...,\infty)) :=\displaystyle \prod_{1\leq j<k\leq N}(x_k-x_j)^{2/\kappa}.
\end{equation}

\subsection{Can we take the limit?}

Let $N_1<N_2<...$ be a sequence of increasing natural numbers. For each $l\in\N$, pick $2N_l$ points $x_{l,1}<...<x_{l,2N_l}$ on $\R$ and let $(\mathbf{x}_l,\mathbf{y}_l)$ be a configuration.
Now we can ask for a description of the limit $\lim_{l\to\infty}\mu_{D,\kappa}(\mathbf{x}_l,\mathbf{y}_l)$. \\
More generally, following the discussion in Section~\ref{sec-multiple}, we can consider the set $S_l$ of configurations for the points $x_{l,1},...,x_{l,2N_l}$.
\begin{question}\label{q2} Under which conditions and in which sense does the limit
$$\lim_{l\to\infty}\mu_{D,\kappa}(S_l)$$
exist and how can it be described? 
\end{question} 

\begin{remark}
Assume that $S_1,S_2,\ldots$ is a sequence of configurations as above. The measure $\mu_{\Ha, \kappa}(S_l)$ induces a probability measure $\nu_l$ on the finite space $S_l$. So, if we forget about the curves and only think of the configurations, we are lead to the question whether there are there interesting, non-trivial limits of $\nu_l$ for $l\to\infty$.   
\end{remark}

In what follows, we only consider the special case of $N_l$ curves connecting $N_l$ points on the real axis to $\infty$.

\subsection{Simultaneous growth}

Let $N\in\N$ and $x_1<\ldots<x_N$ be $N$ points on $\R$. Furthermore, choose $\lambda_1,\ldots,\lambda_N\in(0,1)$ such that $\sum_{k=1}^n\lambda_k=1$.
The $N$ random curves described by $\mu_{\Ha,\kappa}((x_1,...,x_N),\infty)$ can be generated by the Loewner equation
\begin{equation}\label{multi2}
\dot{g}_t(z) = \sum_{k=1}^N\frac{2\lambda_k}{g_t(z)-V_k(t)}, \quad g_0(z)=z\in\Ha,
\end{equation}
where $\lambda_1,\ldots,\lambda_N>0$ and $\sum_{k=1}^N\lambda_k=1$.

The random driving functions $V_1\ldots,V_N$ are given as the solution of the SDE system
\begin{equation}
dV_k =\kappa \lambda_k \cdot \frac{(\frac{\partial}{\partial x_k}H_{\Ha,\kappa})((V_1,...,V_n),\infty)}{H_{\Ha,\kappa}((V_1,...,V_n),\infty)} dt+\sum_{j\not=k}\frac{2\lambda_j}{V_k-V_j}dt+\sqrt{\kappa \lambda_k}dB_k,
\end{equation} 
where $B_1,\ldots,B_N$ are $N$ independent standard Brownian motions (see~\cite[p.1130]{MR2187598}). From \eqref{infinity1} we obtain
\begin{equation}\label{sigma} dV_k = \sum_{j\not=k}\frac{2(\lambda_k+\lambda_j)}{V_k-V_j}dt+\sqrt{\kappa \lambda_k}dB_k.\end{equation}

\begin{remark}
In  fact, we can also consider the case $\lambda_k\in[0,1].$ Then we describe the corresponding marginal distribution of those curves for which $\lambda_k\not=0.$\\
For instance, consider the case $\lambda_1=1$ and $\lambda_k=0$ for $k\geq 2.$ Then \eqref{multi2} describes only one curve with $dV_1 = \sum_{j\not=1}\frac{2}{V_1-V_j}dt+\sqrt{\kappa}dB_1$ and $dV_k=\frac{2}{V_k-V_1}dt,$ i.e.
$V_k(t) = g_t(x_k)$ for $k\geq 2$ (see \cite[Section 4.2]{MR2310306}). This process is a special SLE($\kappa, \rho$) process (see~\cite[p.1796]{MR2358649}). 
\end{remark} 

\section{Proof of Theorem \ref{theorem1}}\label{proof}

\subsection{The McKean-Vlasov equation and the complex Burgers equation}

We recall that $N_1<N_2<\ldots$ is a sequence of natural numbers and that for every $l\in\N,$ $x_{l,1}<\ldots<x_{l,N_l}$ are $N_l$ points on $\R$ such \eqref{inan} holds.\\
Moreover, for every $l\in \N$, we describe $\mu_{D,\kappa}((x_{l,1},\ldots,x_{l,N_l}),\infty)$ by equation \eqref{multi2} with $\lambda_k=\frac1{N_l}$ for each $k$, i.e.
\begin{equation}\label{Loewner1} \dot{g}^l_t(z) = \frac1{N_l} \sum_{k=1}^{N_l} \frac{2}{g^l_t(z)-V_{l,k}(t)}, \quad g_0(z)=z\in\Ha,\end{equation}
with
\begin{equation}\label{drift1} dV_{l,k} = \frac1{N_l}\sum_{j\not=k}\frac{4}{V_{l,k}(t)-V_{l,j}(t)} dt + 
\sqrt{\frac{\kappa}{N_l}}dB_k(t), \; V_{l,k}(0)=x_{l,k}, \end{equation}
for every $k=1,...,N_l$. We define also $\mu^l_t=\frac1{N_l}\sum_{k=1}^{N_l}\delta_{V_{l,k}(t)}$.\\
We expect that we can define $\mu_t$ as the limit of $\mu^l_t$ for $t\to\infty$ and that equation \eqref{drift1} transforms to a differential equation for $\mu_t$. This is true indeed as it was shown in \cite{MR1217451}. 

\begin{theorem}[\cite{MR1217451}, Theorem 1 and equation (11)]
Assume that $\mu^l_0$ converges weakly to $\mu$ in such a way that there exists a $C^\infty$-function $f_0:\R\to [1,\infty)$ with 
$f_0(x)=f_0(-x)$, $f_0(x)\to\infty$ for $x\to\infty$ and
\begin{equation}\label{cond1}
\sup_{l\in\N} \int_\R f_0(x)\mu^l_0(dx)< +\infty.
\end{equation}
Then, for every $t\geq 0,$ the random measure $\mu^l_t$ converges in distribution with respect to weak convergence to the measure $\mu_t$ which is the unique solution of the initial value problem
\begin{equation}\label{mckean}\frac{d}{dt} \left(\int_\R f(x) \mu_t(dx) \right) = 2 \int_{\R}\int_{\R} \frac{f'(x)-f'(y)}{x-y} \mu_s(dx)\mu_s(dy), \quad \mu_0=\mu,\end{equation}
where $f$ runs through the space of all bounded $C^\infty$-functions. If we let $M_t(z)=\int_{\R}\frac{2\mu_t(du)}{z-u},$ $z\in\Ha,$ then $M_t$ solves the following complex Burgers equation
\begin{equation}\label{burgers} \frac{\partial M_t(z)}{\partial t}=-2 M_t(z)\frac{\partial M_t(z)}{\partial z} .\end{equation} 

\end{theorem}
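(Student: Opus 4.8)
The plan is to read \eqref{drift1} as an interacting particle system — a space--time rescaled Dyson Brownian motion, with Coulomb strength parameter $\beta=8/\kappa\ge2$ — and to prove a law of large numbers for its empirical measure. There are four stages. \emph{(i)} Using Itô's formula, derive a closed semimartingale identity for the linear functionals $t\mapsto\int_\R f\,d\mu^l_t$. \emph{(ii)} Show that the martingale part is $\LandauO(N_l^{-1/2})$ and that the drift part converges to the right-hand side of \eqref{mckean}. \emph{(iii)} Prove tightness of the laws of $(\mu^l_t)_{t\ge0}$ on $C([0,\infty);\mathcal P(\R))$ using the Lyapunov function $f_0$ and hypothesis \eqref{cond1}. \emph{(iv)} Identify every subsequential limit with the \emph{unique} solution of \eqref{mckean}, uniqueness being obtained by passing to the Cauchy transform $M_t$ and solving the complex Burgers equation \eqref{burgers}.

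For \emph{(i)}: fix a bounded $C^\infty$ function $f$; applying Itô's formula to $\frac1{N_l}\sum_k f(V_{l,k})$ and inserting \eqref{drift1} gives
\begin{equation*}
d\!\left(\int_\R f\,d\mu^l_t\right)=\frac{4}{N_l^2}\sum_{k}\sum_{j\ne k}\frac{f'(V_{l,k})}{V_{l,k}-V_{l,j}}\,dt+\frac{\kappa}{2N_l^2}\sum_k f''(V_{l,k})\,dt+dM^{l,f}_t,
\end{equation*}
with $M^{l,f}_t=\frac{\sqrt\kappa}{N_l^{3/2}}\sum_k\int_0^t f'(V_{l,k})\,dB_k$. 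Antisymmetrising the first sum in $(k,j)$ rewrites it as $\frac{2}{N_l^2}\sum_{k\ne j}\frac{f'(V_{l,k})-f'(V_{l,j})}{V_{l,k}-V_{l,j}}$; since $(x,y)\mapsto\frac{f'(x)-f'(y)}{x-y}$ extends to a bounded continuous function on $\R^2$ with value $f''(x)$ on the diagonal, this equals $2\int\!\int\frac{f'(x)-f'(y)}{x-y}\,\mu^l_t(dx)\mu^l_t(dy)$ minus a diagonal term of order $\LandauO(N_l^{-1})$, which absorbs into the Itô correction. The bracket of $M^{l,f}$ is $\LandauO(N_l^{-2})$, so by Doob's inequality $M^{l,f}\to0$ in $L^2$, uniformly on compact time intervals; this is stage \emph{(ii)}.

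For \emph{(iii)}: run the same computation with $f_0$ in place of $f$ (replacing $f_0$, if needed, by a comparable even $C^\infty$ function whose first two derivatives are bounded); since then both $f_0''$ and $\frac{f_0'(x)-f_0'(y)}{x-y}$ are bounded, one obtains $\frac{d}{dt}\,\mathbb E\!\left[\int f_0\,d\mu^l_t\right]\le C$ uniformly in $l$, whence $\sup_l\sup_{t\le T}\mathbb E\!\left[\int f_0\,d\mu^l_t\right]<\infty$ by \eqref{cond1}. As $\{f_0\le R\}$ is compact, this supplies the compact-containment condition needed for tightness of the measure-valued paths, while the semimartingale identity of stage \emph{(i)} — bounded drift, vanishing martingale — gives the modulus-of-continuity estimate (Aldous' criterion). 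Any limit point $(\mu_t)$ then satisfies \eqref{mckean} in integrated form.

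For \emph{(iv)}: apply \eqref{mckean} with the admissible test function $f(x)=(z-x)^{-1}$ for fixed $z\in\Ha$ (a bounded smooth function of the real variable $x$, hence legal after a routine approximation). The identity $\frac{1}{(z-x)^2}-\frac{1}{(z-y)^2}=(x-y)\big[\frac{1}{(z-x)(z-y)^2}+\frac{1}{(z-x)^2(z-y)}\big]$ collapses the double integral into the product $\big(\int\frac{\mu_t(dx)}{z-x}\big)\big(\int\frac{\mu_t(dy)}{(z-y)^2}\big)=-\tfrac14 M_t\,\partial_z M_t$, and tracking the constants turns \eqref{mckean} into exactly \eqref{burgers}. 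The complex Burgers equation is then solved by the method of characteristics ($M$ is constant along $z\mapsto z+2tM_0(z)$), which determines $M_t$ on $\Ha$ uniquely from $M_0$; by Stieltjes inversion $M_t$ determines $\mu_t$, so \eqref{mckean} is uniquely solvable and the whole sequence $\mu^l_t$, not merely a subsequence, converges. I expect the genuine difficulty to sit in stages \emph{(iii)}--\emph{(iv)}: controlling the singular $1/(x-y)$ interaction uniformly in $N_l$ — which hinges, as for Dyson's model, on the Coulomb repulsion being strong enough relative to the noise (here guaranteed by $\kappa\le4$, i.e. $\beta\ge2$) to keep the particles apart and the drift integrable — together with the global-in-time analysis of the Burgers characteristics, since $z\mapsto z+2tM_0(z)$ need not preserve $\Ha$, so uniqueness must be argued with care (e.g. via the associated free-convolution semigroup) rather than by a naive implicit-function argument.
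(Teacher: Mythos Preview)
The paper does not prove this statement: it is quoted from Rogers--Shi \cite{MR1217451} (their Theorem~1 and equation~(11)), and the surrounding remarks merely reconcile normalisations and record that hypothesis \eqref{cond1} follows from \eqref{inan}. So there is no in-paper argument to compare against.

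That said, your four-stage outline is essentially the strategy of the cited reference: It\^o's formula on linear statistics of $\mu^l_t$, symmetrisation of the Coulomb drift into the bounded kernel $(f'(x)-f'(y))/(x-y)$, tightness from the Lyapunov function $f_0$, and uniqueness via the Cauchy transform and the method of characteristics for \eqref{burgers}. The computations you sketch in stages (i), (ii) and (iv) are correct, including the algebraic reduction of the double integral to $M_t\,\partial_z M_t$.

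One correction to your closing paragraph: the constraint $\kappa\le 4$ is not what makes this limit theorem go through. The paper itself points out (first remark after the theorem) that Rogers--Shi actually treat noise $\sqrt{8/N_l}\,dB_k$, and that replacing $8$ by any fixed $\kappa>0$ ``has no effect on the limit behaviour'', because both the martingale term and the It\^o correction in your stage (i) are $O(N_l^{-1})$ regardless of the constant in front of $dB_k$. The restriction $\kappa\in(0,4]$ in this paper is there to keep the multiple-SLE curves simple and pairwise disjoint (so that $\mu_{\Ha,\kappa}((x_1,\ldots,x_N),\infty)$ is supported on $\Delta$), not to control the singular interaction in the hydrodynamic limit; your identification ``$\beta=8/\kappa\ge 2$'' conflates these two roles.
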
 

Some remarks are in order.

\begin{remark} In \cite{MR1217451}, the authors consider a slightly different equation (see equation (7) therein). Setting $\theta=0$ and $\alpha=4$, it gives equation \eqref{drift1} except that $\sqrt{\frac{\kappa}{N_l}}dB_k(t)$ has to be replaced by $\sqrt{\frac{8}{N_l}}dB_k(t).$ However, it can be easily checked that this change has no effect on the limit behaviour.\\
Furthermore, we notice that the minus sign before $\alpha/2$ in Theorem 1 is not correct, compare equation (4) with conditions (6) (see also~\cite[equation (2.12)]{MR1698948}).
\end{remark}

\begin{remark}
The technical condition \eqref{cond1} is satisfied, e.g., when the support of $\mu^l$ is bounded by a constant independent of $l$, which is our assumption \eqref{inan}. Also, note that every probability measure $\mu$ can be approximated by some $\mu^l$ that satisfy \eqref{cond1} (see the remark after Theorem 1 in \cite{MR1217451}).\end{remark}

\begin{remark}
Equation \eqref{mckean} is a special case of a McKean-Vlasov equation which can also be written as   
$$\frac{\partial \mu_t}{\partial t} = -4\cdot  \frac{\partial (\mu_t H(\mu_t))}{\partial x}  , \quad \mu_0=\mu,$$
where $H(\mu_t)$ denotes the Hilbert transform of $\mu_t$ and the equation is understood in a
 distributional sense (see~\cite[p. 392]{MR1698948}).
\end{remark}

Now we come back to the Loewner equation \eqref{Loewner1}. It can be written as 
$$ g^l_t(z) = z + \int_{0}^t \int_\R \frac{2}{g^l_s(z)-u} \, \mu^l_s(du)ds.$$
For each $s\geq0$ the measure $\mu^l_s$ converges in distribution with respect to weak convergence to $\mu_s$. Thus, the measure $\mu^l_s(du)ds,$ $0\leq s \leq t,$ converges in distribution with respect to weak convergence to $\mu_s(du) ds.$\\
This implies that for each $t\geq 0$ the conformal mapping $g^l_t$ converges in distribution with respect to locally uniform convergence to $g_t$, the solution of \eqref{Loewner2} (see Theorem 1.1 in \cite{quantum} which proves this correspondence for the radial Loewner equation).

\subsection{Proof of \texorpdfstring{\eqref{limitbehaviour}}{}}\label{short}

Next we show that $M_t(z)\to 0$ locally uniformly in $\Ha$ as $t\to\infty.$ \\
Let $z_0\in \Ha$ and denote by $z(t)$ the solution to \begin{equation}\label{sL}\dot{z}(t)=2 M_t(z(t)), \quad z(0)=z_0.\end{equation}
A simple calculation shows that $M_t(z(t))$ is constant:
\begin{eqnarray*} \frac{d M_t(z(t))}{d t} &=& \frac{\partial M_t(z(t))}{\partial t} + \frac{\partial M_t(z(t))}{\partial z} \dot{z}(t)\\
&=&  -2M_t(z(t))\frac{\partial M_t(z(t))}{\partial z} + 2  M_t(z(t)) \frac{\partial M_t(z(t))}{\partial z} = 0.
\end{eqnarray*} 
Hence
 \begin{equation}
\label{ele}M_t(z(t)) = M_0(z(0)).
\end{equation}
 Furthermore, $\ddot{z}(t)=0,$ so $z(t)=z_0+2 M_0(z_0) \cdot t.$ This defines $z(t)$ for all $t\in[0,\infty).$\\
 Let $z_1\in \Ha$ be fixed. Then $M_t(z_1)=M_0(z_0(t))$ where $z_0(t)$ is determined by
 \begin{equation}\label{shift}z_1=z_0(t)+2M_0(z_0(t))\cdot t.\end{equation}
Note that $\Im(M_0(z))<0$ for all $z\in \Ha.$ So $\Im(z_0(t))> \Im(z_1)$ 
and
\begin{equation}
|M_0(z)|\leq\int_{\R}\frac{2\mu_0(du)}{|z-u|}\leq  \int_{\R}\frac{2\mu_0(du)}{\Im(z_1)}=\frac{2}{\Im(z_1)}.
\end{equation}
Hence, $M_0(z)$ is bounded on the set of all $z\in \Ha$ with $\Im(z)> \Im(z_1)$.\\

 Now, when $t$ goes to infinity, $|z_0(t)|$ goes to $\infty$ as well. Otherwise, if $z_0(t)$ had a bounded subsequence $z_0(t_n)$, $n\in\N$, then $M_{0}(z_0(t_n))$ would be bounded as well and \eqref{shift} could not hold for all $n\in\N$. Consequently, $$\text{$M_t(z_1)=M_0(z_0(t))=\int_{\R}\frac{2\mu_0(du)}{z_0(t)-u}\to 0$ \quad for \quad $t\rightarrow\infty.$}$$
As $$|M_t(z)|\leq\int_{\R}\frac{2\mu_t(du)}{|z-u|}\leq  \int_{\R}\frac{2\mu_t(du)}{\Im(z)}=\frac{2}{\Im(z)},$$ 
the family $\{M_t\}_{t\geq0}$ is locally bounded. Thus, the Vitali-Porter theorem implies locally uniform convergence of $M_t(z)$ to 0.

\subsection{Proof of Theorem~\texorpdfstring{\ref{form1}}{}}

The proof of \eqref{form1} is divided into several lemmas. First, we prove the boundedness of $K_t$.

\begin{lemma}\label{1} The set $K_t$ is bounded for every $t\geq 0.$
\end{lemma}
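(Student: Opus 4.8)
The plan is to show that $K_t$ stays inside a fixed compact set by controlling the real part of the Loewner flow $g_t(z)$ for $z$ with large real part, using the structure of the vector field $M_t$. Since $M_0(z) = \int_\R \frac{2\mu(du)}{z-u}$ with $\supp\mu \subset [-M,M]$, the vector field $M_0$ is holomorphic and bounded outside a neighbourhood of $[-M,M]$; in fact $|M_0(z)| \le \frac{2}{\dist(z,[-M,M])}$ for $z \notin [-M,M]$. Because $M_t(z) = M_0(z_0(t))$ along the characteristic curves $z(t) = z_0 + 2M_0(z_0)t$ established in Section~\ref{short}, the same bound propagates: wherever $M_t$ is defined we have $|M_t(z)| \le 2/\Im(z_1)$ (already shown) and, more to the point, $M_t(z)$ inherits the decay of $M_0$ at the specific value $z_0(t)$ determined by \eqref{shift}.

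First I would make the boundedness of $K_t$ precise by a trapping argument. Fix $R > M$ large. For a point $z = x+iy \in \Ha$ with $x > R$, I want to show $T(z) = \infty$, i.e.\ $z \notin K_t$ for any $t$. Consider $w(t) = g_t(z)$; it solves $\dot w = M_t(w)$. The key observation is that $M_t$ maps $\Ha$ to the lower half-plane (indeed $\Im M_t < 0$), so $\Im g_t(z)$ is non-increasing — but that alone does not prevent $\Im$ from reaching $0$. Instead I would track $\Re g_t(z)$ together with $\Im g_t(z)$: since $M_t$ is a Herglotz-type function associated to a measure supported, after the flow, in a controlled region, one shows that $\Re g_t(z)$ cannot decrease below $R$ while the trajectory stays to the right of the ``charge region''. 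The cleanest route is to note that the whole picture is a pushforward: $g_t$ conjugates the measure flow, and the hull $K_t$ is exactly the complement of the domain where $M_t$ (equivalently the evolving $\mu_t$) lives; since $\mu_t$ has total mass $1$ and, by \eqref{inan} plus the explicit characteristics, $\supp\mu_t$ remains bounded (each characteristic moves a bounded distance because $|M_0|$ is bounded on $\supp\mu_0$ — here one uses $\Im M_0 < 0$ to keep characteristics from colliding, or invokes directly the well-posedness from \cite{MR1217451}), one concludes $\supp\mu_t \subset [-M', M']$ for a constant $M'$ independent of $t$.

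Given $\supp\mu_t \subset [-M',M']$ uniformly in $t$, the bound $K_t$ follows: a point $z \in \Ha$ with $T(z) = t_0 < \infty$ would have $g_s(z) \to$ some real point as $s \to t_0$, and by the Loewner equation the trajectory $g_s(z)$ is driven by $M_s$ which is uniformly Lipschitz on compact subsets of $\{|\Re| > M'+1\} \cap \{\Im > \delta\}$; a standard comparison shows that once $|\Re g_s(z)| > M'+1$ and $\Im g_s(z) > \delta$, the trajectory can only be absorbed after time at least $c\,\delta^2$ or can escape, giving a uniform lower bound $T(z) \ge c\,\dist(z, [-M'-1,M'+1])^2$ for such $z$; hence all of $K_t$ lies in a bounded set whose size depends only on $M'$ and $t$ — in fact, since $\Im g_s \le \Im z$ always and the half-plane capacity grows like $2t$, one gets the cruder but sufficient bound $K_t \subset \{|z|^2 \le C(M' + t)\}$.

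\textbf{The main obstacle} I anticipate is establishing the uniform-in-$t$ boundedness of $\supp\mu_t$ (equivalently, that the characteristics $t \mapsto z_0 + 2M_0(z_0)t$ do not let the ``boundary of the support'' run off to infinity, nor let the flow $g_t$ develop a hull growing faster than polynomially in $t$). One must rule out that mass escapes to infinity along the real line; this is where $\Im M_0 < 0$ on $\Ha$ and the reflection symmetry are used — real characteristics starting in $[-M,M]$ feel a drift $2M_0$ which, by the Herglotz representation, points ``inward'' on average, so the support actually contracts rather than expands. Making this rigorous is the crux; the rest of Theorem~\ref{form1} (analyticity of $\partial K_t \cap \Ha$ for $t > T$), which presumably follows in later lemmas from \eqref{limitbehaviour} and the Burgers dynamics forcing $\mu_t$ to become absolutely continuous with analytic density after a finite time, is logically separate from this boundedness step.
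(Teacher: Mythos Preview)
Your proposal contains a genuine error at the crux you yourself identified. The claim that $\supp\mu_t \subset [-M',M']$ for a constant $M'$ independent of $t$ is \emph{false}, and so is the heuristic that the drift ``points inward'' and the support contracts. For $x_0>b=\sup(\supp\mu_0)$ one has $M_0(x_0)=\int_\R \frac{2\mu_0(du)}{x_0-u}>0$, so real characteristics starting to the right of the support move \emph{outward}. The paper's own example $\mu=\delta_0$ (Section~\ref{outlook}) gives $\supp\mu_t=[-4\sqrt{t},4\sqrt{t}]$, which grows without bound. Hence any argument built on a $t$-uniform $M'$ cannot succeed, and the trapping step you sketch for $K_t$ collapses with it.

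The paper's proof proceeds quite differently and only establishes boundedness of $\supp\mu_t$ (and hence of $K_t$) for each \emph{fixed} $t$. It takes a real starting point $x_0\notin\supp\mu_0$ and uses the real Burgers theory: since $M_0'(x_0)=\int_\R\frac{-2\mu_0(du)}{(x_0-u)^2}<0$, the characteristic $y(t)=x_0+2M_0(x_0)t$ is absorbed by $\supp\mu_t$ at the explicit finite time $T(x_0)=-1/(2M_0'(x_0))$. For $x_0>b$ the map $x_0\mapsto T(x_0)$ is strictly increasing, so for any given $T>0$ one can solve for $x_0(T)$ and read off $\sup(\supp\mu_T)=x_0(T)+2T\,M_0(x_0(T))$; the infimum is handled symmetrically. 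This yields a finite, explicit, but $T$-dependent bound on $\supp\mu_T$, from which the boundedness of $K_T$ follows. The missing idea in your attempt is precisely this use of the finite breaking time of real Burgers characteristics from \emph{outside} the support to locate the (moving) support boundary, rather than trying to confine the support by an inward-drift argument that does not hold.
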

\begin{proof}
Let $x_0\in\R$ with $x_0\not\in \supp \mu_0$ and consider the solutions to the real initial value problem \begin{equation*}
\dot{y}(t)=2 M_{t}(y(t)), \quad y(0)=x_0.
\end{equation*}

By the theory of the real (inviscid) Burgers equation (see \cite[p.77, 78]{MR2238098}), they exist locally and the  lifetime $T(x_0)$ of $y(t)$ is finite, for
\begin{equation}\label{eac} 
M_0'(x_0)=\int_\R\frac{-2\mu_0(du)}{(x_0-u)^2}<0.
\end{equation}
This implies that $y(t)$ will hit $\supp(\mu_t)$ at time $t=T(x_0)$, which is given by 
\begin{equation}
\label{endtime}T(x_0) = \frac{-1}{2 M_0'(x_0)}.
\end{equation}

For $T>0$ we can now compute  $S(T):=\sup(\supp \mu_T)$ as follows.\\
Let $[a,b]$ be the smallest interval containing $\supp \mu_0$ and assume $x_0>b.$ Note that $T(x'_0)>T(x_0)$ for any $x'_0>x_0.$ This gives us a one-to-one correspondence between all times $T>0$ and all $x_0>b.$\\
In order to determine $S(T),$ we can first calculate $x_0(T)>b$ according to \eqref{endtime} and then compute $S(T)=y(T)=x_0(T)+2TM_0(x_0(T)).$ Similarly, we can compute $\inf(\supp \mu_T)$ by considering $x_0<a.$
Consequently, the measure $\mu_t$ has bounded support for every $t\geq0$ which implies that the hull $K_t$ is bounded for every $t\geq 0.$
\end{proof}

\begin{lemma}\label{3}There exists a time $T>0$ such that $\supp \mu_t$ is a bounded interval for all $t\geq T.$
\end{lemma}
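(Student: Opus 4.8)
The plan is to show that, after a finite time, the support of $\mu_t$ stops being a disconnected set and becomes a single interval, exploiting the explicit flow structure coming from the complex Burgers equation that was already used in Lemma~\ref{1}. Recall from Lemma~\ref{1} that each boundary point $x_0\notin\supp\mu_0$ flows along $\dot y(t)=2M_t(y(t))$ with $M_t(y(t))\equiv M_0(x_0)$ constant, so $y(t)=x_0+2tM_0(x_0)$, and it is absorbed into $\supp\mu_t$ exactly at time $T(x_0)=-1/(2M_0'(x_0))$. The key observation is that the characteristic curves through points of $\R\setminus\supp\mu_0$ are straight lines whose slopes are $2M_0(x_0)$, and on a ``gap'' $(\alpha,\beta)$ of $\R\setminus\supp\mu_0$ (a bounded maximal interval in the complement with $\alpha,\beta\in\supp\mu_0$), the function $x_0\mapsto 2M_0(x_0)$ is real-analytic, strictly decreasing (since $M_0'(x_0)<0$ by \eqref{eac}), with $M_0(x_0)\to+\infty$ as $x_0\downarrow\alpha$ and $M_0(x_0)\to-\infty$ as $x_0\uparrow\beta$. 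Hence the left part of the gap moves right faster than the right part moves left: the characteristics emanating from $(\alpha,\beta)$ converge and the gap closes in finite time.

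More precisely, I would argue as follows. Fix a bounded gap $(\alpha,\beta)$ of $\R\setminus\supp\mu_0$. For $x_0\in(\alpha,\beta)$ the characteristic $y(t)=x_0+2tM_0(x_0)$ survives until it hits the moving support; since $M_0$ is unbounded at both ends of the gap while it is bounded (say by $|M_0|\le K$) on a fixed closed subinterval, characteristics starting near $\alpha$ eventually overtake characteristics starting near $\beta$ unless they have been absorbed first. In either case, the ``hole'' in the support above $(\alpha,\beta)$ has been filled by a finite time $T_{(\alpha,\beta)}$; indeed one can bound $T_{(\alpha,\beta)}$ crudely by noting that a characteristic from a point with $M_0(x_0)\ge 1$ and one from a point with $M_0(x_0)\le -1$ (both of which exist in the gap, by the intermediate value theorem applied to the continuous surjection $M_0\colon(\alpha,\beta)\to\R$) must have met by time $|\beta-\alpha|/4$, at which time the entire interval between their absorption images lies in $\supp\mu_t$. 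Since by Lemma~\ref{1} the support $\supp\mu_0$ is compact, it has only finitely many gaps of length exceeding any fixed $\varepsilon>0$; the remaining (arbitrarily short) gaps close even faster, so there is a uniform finite time $T$ after which all gaps are closed and $\supp\mu_t$ is connected, hence an interval (it is also closed and bounded by Lemma~\ref{1}). For $t\ge T$ one also checks that it remains an interval: once the support is a single interval, the characteristics at its two endpoints are the extremal ones computed in Lemma~\ref{1}, so no new gaps can open.

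The main obstacle I anticipate is justifying rigorously that a point which lies ``above a gap'' in the complement genuinely enters $\supp\mu_t$ precisely when its characteristic is caught, i.e.\ controlling the motion of $\supp\mu_t$ itself rather than just the characteristics through complementary points; the clean way around this is to use that $\mu_t$ solves the Burgers/McKean--Vlasov equation \eqref{mckean}–\eqref{burgers}, whose solution by characteristics means $\supp\mu_t$ is exactly the image of $\supp\mu_0$ under the (possibly multivalued, shock-forming) characteristic flow together with the region swept out by colliding characteristics — and the set $\R\setminus\supp\mu_t$ above a given gap is precisely the set of points not yet reached by characteristics from $\overline{(\alpha,\beta)}$'s endpoints. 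A secondary technical point is to handle possibly infinitely many small gaps; this is dispatched by the compactness of $\supp\mu_0$ and the monotone dependence $T(x_0')>T(x_0)$ for $x_0'>x_0>b$ noted in Lemma~\ref{1}, which gives the needed uniformity of the closing times across the (at most countably many, and summably short) gaps. Once these points are in place, taking $T:=\sup$ of the finitely many relevant gap-closing times (those of gaps longer than, say, the smallest that could remain open past the extremal absorption time computed in Lemma~\ref{1}) finishes the proof.
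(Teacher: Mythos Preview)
Your argument has a genuine gap: the claim that $M_0(x_0)\to+\infty$ as $x_0\downarrow\alpha$ (and $\to-\infty$ as $x_0\uparrow\beta$) at the endpoints of a gap $(\alpha,\beta)$ is not true in general. It holds if $\mu_0$ has an atom or a density bounded below near the endpoint, but if the density vanishes there (e.g.\ $d\mu_0=c|u|\,du$ on $[-1,0]$), the Cauchy integral stays bounded as $x_0\downarrow 0$. Consequently your intermediate-value step producing points with $M_0=\pm1$ can fail, and with it your bound $|\beta-\alpha|/4$. The subsequent heuristic that ``short gaps close even faster'' is likewise unsupported: the closing time is governed by $M_0'$, not by the gap length.

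The paper's proof sidesteps all of this by bounding $M_0'$ rather than $M_0$. For $x_0\in A:=I_0\setminus\supp\mu_0$ (where $I_0$ is the convex hull of $\supp\mu_0$) and any $u\in\supp\mu_0$ one has $|x_0-u|\le |I_0|$, so
\[
M_0'(x_0)=-2\int_\R\frac{\mu_0(du)}{(x_0-u)^2}\le -\frac{2}{|I_0|^2},
\]
which immediately gives the \emph{uniform} bound $T(x_0)=-1/(2M_0'(x_0))\le |I_0|^2/4=:T$ over all gaps at once, with no case distinction on gap length and no need for $M_0$ to blow up. The remaining step---showing $\supp\mu_T$ is actually an interval, which you correctly flag as the ``main obstacle''---is handled by a short backward-flow contradiction: if $x\in I_T\setminus\supp\mu_T$, then running $\dot y=-2M_{T-t}(y)$ from any point of the surrounding gap $J\ni x$ shows $\dist(x,\supp\mu_{T-t})$ increases, yet by the choice of $T$ the point $x$ must lie in $\supp\mu_s$ for some $s<T$. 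This replaces your appeal to the (delicate) shock structure of the characteristic map with a direct monotonicity argument.
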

\begin{proof}
For $x_0\in\R\setminus \supp \mu_0,$ let $T(x_0)$ be defined as in \eqref{endtime}. Denote with $I_0$ the smallest interval containing $\supp \mu_0$ and let $A=I_0\setminus \supp \mu_0.$ Because of \eqref{eac}, the value $M_0'(x_0)$ is bounded from below on $A$ which implies $T:=\sup_{x_0\in A} T(x_0)<\infty.$\\
 Now let $I_T$ be the smallest interval containing $\supp \mu_T.$ We would like to show that $I_T = \supp \mu_T.$\\
So, assume there exists $x\in I_T \setminus \supp_T.$ Let $J$ be the largest open interval with $x\in J$ that is contained in $I_T \setminus \supp \mu_T.$\\
 On the one hand, there exists a time $s<T$ such that $x\in \supp \mu_s$. For $x\in I_0$, this follows from the construction of $T$, whereas for $x\in I_T\setminus I_0$, it follows from the monotonicity properties of the function $x_0\mapsto T(x_0).$\\
On the other hand, we can solve the backward version of \eqref{burgers3} with initial values in $J$, i.e. 
$$ \dot{y}(t)=-2 M_{T-t}(y(t)), \quad y(0)=y_0 \in J, $$
showing that the distance of $x$ to $\supp \mu_{T-t}$ increases when $t$ goes from $0$ to $T$, a contradiction.  
\end{proof}

\begin{lemma}\label{2} Assume that $\supp \mu_0 \subset [a,b].$ Let $x_0\in \R\setminus[a,b]$ and let $x(t)$ be the solution to \eqref{Loewner2} with initial value $x_0\in\R\setminus[a,b]$, i.e.
\begin{equation}\label{Loewner3}\dot{x}(t) = M_t(x(t)), \quad x(0)=x_0.\end{equation}
Then $x(t)$ has a positive finite lifetime, in the sense that there exists $0<S$ such that 
$$\text{$x(t)\not\in\supp \mu_t$ for $t<S$ \quad and \quad $\lim_{t\uparrow S}\dist(x(t),\supp \mu_t) =0.$}$$
\end{lemma}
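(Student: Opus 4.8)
The plan is to define $S$ as the maximal time for which the solution $x(t)$ of \eqref{Loewner3} stays strictly to the right of $\supp\mu_t$, and to prove $S<\infty$ by a one‑dimensional comparison of $x(t)$ with $r(t):=\sup(\supp\mu_t)$. The guiding heuristic is that the right endpoint of the support moves at the inviscid Burgers characteristic speed $2M_t$, whereas $x(t)$ moves only at the Loewner speed $M_t$, so the support must overtake $x(t)$ in finite time. By the reflection $u\mapsto-u$ it is enough to treat $x_0>b$, and we recall from Lemma~\ref{1} that $r(t)$ is finite and continuous.

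First I would extract from the proof of Lemma~\ref{1} the description of $r(t)$ that is needed: for each $t>0$ there is a unique $x_0(t)>b$ with $T(x_0(t))=t$, i.e.\ $1+2t\,M_0'(x_0(t))=0$, the assignment $t\mapsto x_0(t)$ being the increasing inverse of the bijection $T\colon(b,\infty)\to(0,\infty)$; one has $r(t)=x_0(t)+2t\,M_0(x_0(t))$, and since the value of $M$ is constant along characteristics (the computation of Section~\ref{short}, applied to the real characteristic through $x_0(t)$ and passed to the limit at the hitting time), also $M_t(r(t))=M_0(x_0(t))$, which is therefore finite and positive. Differentiating the formula for $r$ and using $1+2t\,M_0'(x_0(t))=0$ then gives the identity $\dot r(t)=2M_0(x_0(t))=2M_t(r(t))$. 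Finally, since $T$ is bounded on bounded subsets of $(b,\infty)$ one has $x_0(t)\to\infty$, hence $r(t)\ge x_0(t)\to\infty$, as $t\to\infty$.

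Next I would handle existence: on the open region $\{\xi>r(t)\}$ the field $M_t(\xi)=\int_\R\frac{2\mu_t(du)}{\xi-u}$ is continuous in $t$ and real‑analytic in $\xi$, so \eqref{Loewner3} has a unique solution there; the bound $0<\dot x(t)=M_t(x(t))\le\frac{2}{x(t)-r(t)}$ forbids escape to $+\infty$ in finite time, so the solution is defined on a maximal interval $[0,S)$, and either $S=\infty$ or $d(t):=x(t)-r(t)\to0$ as $t\uparrow S$. Since $d(0)=x_0-r(0)\ge x_0-b>0$ we get $S>0$, and for $t<S$ we have $x(t)>r(t)=\sup\supp\mu_t$, so $x(t)\notin\supp\mu_t$. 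To rule out $S=\infty$: on $(r(t),\infty)$ the map $\xi\mapsto M_t(\xi)$ is positive and strictly decreasing, so with the identity $\dot r(t)=2M_t(r(t))$ just noted,
\[ \dot d(t)=M_t(x(t))-2M_t(r(t))<M_t(r(t))-2M_t(r(t))=-M_0(x_0(t))<0; \]
integrating and using $\int_0^t M_0(x_0(s))\,ds=\tfrac12\int_0^t\dot r(s)\,ds=\tfrac12\big(r(t)-r(0)\big)$ yields $0<d(t)<d(0)-\tfrac12\big(r(t)-r(0)\big)$ for $t<S$, which is impossible for large $t$ because $r(t)\to\infty$. Hence $S<\infty$, and $\dist(x(t),\supp\mu_t)\le x(t)-r(t)=d(t)\to0$ as $t\uparrow S$, as claimed.

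The step I expect to be the main obstacle is justifying the two facts borrowed from the support analysis — the transport identity $M_t(r(t))=M_0(x_0(t))$, and thus $\dot r(t)=2M_t(r(t))$, together with $r(t)\to\infty$ — since these rest on the method‑of‑characteristics picture for the inviscid Burgers equation that is only indicated in the proof of Lemma~\ref{1}; in particular finiteness of $M_t(r(t))$ secretly encodes that $\mu_t$ carries no atom at its right endpoint for $t>0$. Once these are in hand, everything else is an elementary comparison of scalar ODEs.
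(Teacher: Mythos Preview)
Your argument is correct, but the paper takes a shorter and more self-contained route. Instead of tracking the right endpoint $r(t)=\sup\supp\mu_t$ and its velocity, the paper compares $x(t)$ directly with the single Burgers characteristic $y(t)$ starting at the \emph{same} point $x_0$, i.e.\ the solution of $\dot y=2M_t(y)$, $y(0)=x_0$, which (by the computation already done in Lemma~\ref{1}) is the straight line $y(t)=x_0+2tM_0(x_0)$ and hits $\supp\mu_t$ at the finite time $T(x_0)$. Since $\dot x(0)=M_0(x_0)<2M_0(x_0)=\dot y(0)$ one has $x(t)<y(t)$ for small $t$; a first-crossing argument then shows $x(t)<y(t)$ as long as $x$ stays outside the support, which forces $x$ to be absorbed before $y$ is, i.e.\ $S<T(x_0)$.

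The advantage of the paper's approach is that it never evaluates $M_t$ at the boundary of the support: all that is used is the value $M_t(y(t))=M_0(x_0)$ along a characteristic starting strictly outside $[a,b]$, so the question of whether $\mu_t$ has an atom at $r(t)$ (your flagged ``main obstacle'') simply does not arise. Your route, by contrast, needs the identity $\dot r(t)=2M_0(x_0(t))$ together with the one-sided limit $M_t(r(t)^+)=M_0(x_0(t))$; these are obtainable from the method of characteristics, and they yield the pleasant quantitative bound $d(t)<d(0)-\tfrac12\big(r(t)-r(0)\big)$, but they cost an extra boundary-regularity discussion that the paper avoids entirely. Incidentally, for your argument the weaker statement $\lim_{\xi\downarrow r(t)}M_t(\xi)=M_0(x_0(t))$ already suffices, so the no-atom statement is not strictly needed.
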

\begin{proof}
Without of loss of generality, we can consider only the case $x_0>b.$\\
The solution $y(t)$ to \begin{equation}\label{burgers3}\dot{y}(t)=2 M_{t}(y(t)), \quad y(0)=x_0,\end{equation}
 will hit $\supp(\mu_t)$ at $t=T(x_0)$.
Now we compare $x(t)$ with $y(t).$ As $M_0(x_0)=\int_{[a,b]}\frac{2\mu_0(du)}{x_0-u}>0$ we have $0<\dot{x}(0)<2\dot{x}(0)=\dot{y}(0)$ and consequently, $y(t)>x(t)$ for $t$ small enough. \\
Assume that $x(t)$ does not hit $\supp \mu_t$ for $t\in[0,T(x_0)].$ Then there is a first time $t_0\leq T(x_0)$ with $x(t_0)=y(t_0).$ Hence there exists an interval $[t_0-\eps,t_0]$ such that 
$$\dot{x}(t)=M_t(x(t))<2M_t(y(t))=\dot{y}(t)$$ 
for all $t\in[t_0-\eps,t_0].$ As $x(t)\leq y(t)$ in that interval, we cannot have $x(t_0)=y(t_0).$ \\
So $x(t)$ hits $\supp \mu_t$ and stays away from $y(t).$ As a consequence, there exists a time $S< T(x_0)$ such that $x(t)\not\in\supp \mu_t$ for $t<S$ and $\lim_{t\uparrow S}\dist(x(t),\supp \mu_t)=0.$
\end{proof}

Now we complete the proof of Theorem \ref{form1}.
 
\begin{lemma} There exists $T>0$ such that the boundary $\partial K_t\cap\Ha$ is an analytic curve in $\Ha$ for every $t>T$.
\end{lemma}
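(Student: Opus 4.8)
The plan is to combine the previous two lemmas to show that for large $t$ the hull $K_t$ is generated by a Loewner evolution whose driving data is smooth, and then invoke the standard regularity theory for the (deterministic) Loewner equation. First I would fix $T>0$ as in Lemma \ref{3}, so that for every $t\geq T$ the support $\supp\mu_t$ is a bounded interval, say $\supp\mu_t=[a(t),b(t)]$; by the construction in Lemma \ref{1} (the explicit formula $S(T)=x_0(T)+2TM_0(x_0(T))$ and its analogue at the left endpoint) the endpoint functions $t\mapsto a(t)$ and $t\mapsto b(t)$ are real-analytic on $(T,\infty)$, since $x_0(T)$ depends analytically on $T$ through the analytic relation \eqref{endtime} and $M_0$ is analytic off $[a,b]$. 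Moreover $M_t(z)=\int_\R 2\mu_t(du)/(z-u)$ is, for each $t>T$, holomorphic on $\C\setminus[a(t),b(t)]$, and by the Burgers flow \eqref{burgers2} together with the method of characteristics used in Lemmas \ref{1}--\ref{2}, $M_t(z)$ extends to a jointly real-analytic function of $(t,z)$ on the region $\{(t,z): t>T,\ z\notin[a(t),b(t)]\}$.

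Next I would describe $\partial K_t\cap\Ha$ directly. By Lemma \ref{2}, each point $x_0\in\R\setminus[a,b]$ flows under \eqref{Loewner3} and is absorbed into $\supp\mu_s$ at a finite time $S(x_0)$; the point where it is absorbed is an endpoint of the growing interval, and the trace of the tip is exactly $\partial K_t\cap\Ha$ at the appropriate time. Concretely, the boundary of $K_t$ in $\Ha$ is the image under $g_t^{-1}$ of the slit-generating point, and the slit tip at time $t$ is the solution $z=\zeta(t)$ of the ODE $\dot{\zeta}(t)=M_t(\zeta(t))$ started from the (real) endpoint at the time that endpoint first touches $\supp\mu$. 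Since $M_t$ is jointly analytic in $(t,z)$ away from the cut and the initial endpoint depends analytically on the starting time, the Cauchy--Kovalevskaya/analytic-dependence theorem for ODEs gives that $\zeta(t)$ is a real-analytic curve in $\Ha$ for $t>T$; pulling back by the analytic map $g_t^{-1}$ (which is conformal on $\Ha$ and extends analytically across the analytic arc $\partial K_t\cap\Ha$ by Schwarz reflection, as in the one-slit case) shows $\partial K_t\cap\Ha$ is an analytic curve. That $K_t$ has no interior sticking out — i.e. the complement $D_t$ really is a slit domain and not something with nonempty interior removed — follows because $M_0(z)=\int 2\mu(du)/(z-u)$ has $\Im M_0<0$ on $\Ha$, so along the characteristics $z(t)=z_0+2M_0(z_0)t$ the imaginary part of the driving vector field never forces a two-dimensional piece of the plane to be swallowed; equivalently, the measure $\mu_t$ stays supported on $\R$ and the half-plane capacity grows continuously, so $K_t$ is a standard Loewner hull and its boundary in $\Ha$ is a single arc.

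The main obstacle I expect is the passage from ``$\supp\mu_t$ is an analytic-endpoint interval'' to ``$\partial K_t\cap\Ha$ is an analytic curve,'' because this requires controlling the regularity of the conformal map $g_t$ up to the boundary and identifying the preimage of the tip precisely. Here I would use the classical fact that if the driving function of a (chordal) Loewner chain is real-analytic on an interval, then the generated hull is a real-analytic slit on the corresponding time interval — but our evolution \eqref{Loewner2} is not in the standard single-driving-point form, so I would first conformally straighten: apply $g_t$ to push the interval $[a(t),b(t)]$ and reduce to a setting where the field $M_t$ near the tip looks like $2\lambda/(w-V(t))$ plus an analytic remainder, with $V(t)$ real-analytic. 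Verifying that the remainder is analytic uniformly near the tip, and that the tip stays at positive distance from the real line for $t$ in compact subintervals of $(T,\infty)$ (which is exactly the content of Lemma \ref{2}, $\dist(x(t),\supp\mu_t)>0$ reflected into $\Ha$), is the technical heart of the argument; once that is in place, Schwarz reflection across the analytic arc and analytic dependence of ODE solutions on parameters finish the proof.
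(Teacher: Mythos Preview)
Your proposal has a genuine gap, and it stems from a misreading of what the boundary $\partial K_t\cap\Ha$ looks like in this setting. This is a measure-driven Loewner evolution: at each time the driving data is the whole interval $\supp\mu_t=[a(t),b(t)]$, not a single point. There is no ``tip'' $\zeta(t)$ tracing out the boundary, and $\partial K_t\cap\Ha$ is not the preimage of a driving point under $g_t^{-1}$; rather it consists of those $z_0\in\Ha$ for which $g_s(z_0)$ approaches the \emph{interior} of the support interval as $s\uparrow t$. Your plan to straighten near a tip and reduce to the single-slit regularity theorem therefore does not get off the ground, and you correctly flag this passage as the main obstacle without resolving it.

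The paper's argument supplies exactly the missing idea, and it is not ``standard regularity theory'' but a structural feature of the Burgers equation. One introduces the auxiliary flow $\dot f_t(z)=2M_t(z)$, $f_0(z)=z$. Because the characteristics of Burgers are straight lines (Section~\ref{short}: $f_t(z_0)=z_0+2M_0(z_0)t$), the map $f_{t_0}$ extends analytically across $\partial L_{t_0}$ automatically, so $\partial L_{t_0}\cap\Ha$ is analytic. The crucial consequence is the identity $M_t(z)=M_0(f_t^{-1}(z))$ from \eqref{ele}: since $f_t^{-1}$ sends the open interval $(f_t(a_t),f_t(b_t))=\operatorname{int}(\supp\mu_t)$ to the analytic arc $\partial L_t\cap\Ha\subset\Ha$, and $M_0$ is holomorphic on $\Ha$, this gives an analytic extension of $M_t$ \emph{across} the support of $\mu_t$, not merely up to it. Your proposal only asserts analyticity of $M_t$ on $\{z\notin[a(t),b(t)]\}$, which is the easy direction and is not enough. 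Once $M_t$ extends analytically through the interior of the interval, and Lemma~\ref{2} guarantees that boundary points of $K_t$ are sent by $g_t$ into that interior (the endpoints being hit only by real initial data), the analytic extension of $g_{t_0}$ near any $z_0\in\partial K_{t_0}\cap\Ha$ follows directly from the ODE, and the curve is analytic.
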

\begin{proof}
First, consider the Loewner equation 
\begin{equation}\label{simpler} \dot{f}_t(z) = 2M_t(z), \quad f_0(z)=z\in \Ha.
\end{equation}

Let $L_t$ be the generated hull, i.e. $z\mapsto f_t(z)$ is a conformal mapping from $\Ha\setminus L_t$ onto $\Ha.$\\
Fix a time $t_0>0$ and let $z_0\in\Ha$ be a point such that 
$$\text{$f_t(z_0)\in \Ha$ for every $t<t_0$ and $\Im(f_t(z_0))\to0$ for $t\uparrow t_0$.} \qquad (*)$$ 
This condition implies that $z_0$ belongs to $\partial L_{t_0}.$\\
Now, since the function $t\mapsto f_t(z_0)$ is a straight line (see Section \ref{short}) we can extend $f_{t_0}$ analytically to a neighbourhood $U$ of $z_0$, and from $$U\cap \partial L_{t_0} = f_{t_0}^{-1}(f_{t_0}(U)\cap \{\Im(z)=0\}),$$
we see that $\partial L_{t_0}$ is an analytic curve in a neighbourhood of $z_0$. \\
Furthermore, as $f_t(z_0)$ belongs to the lower half-plane for $t>t_0,$ the sets $L_t$ are ``uniformly growing'' in the sense that if $w\in\Ha$ with $w\in\partial L_t$ for some $t\geq 0$, then $w\not\in \partial L_s$ whenever $s\not=t.$ Hence,  condition $(*)$ is in fact equivalent to $z_0 \in \partial L_t$ and, consequently, $\partial L_t \cap \Ha$ is locally an analytic curve.\\

Lemma \ref{3} implies that there exists a time $T>0$ such that $L_t$ is connected for all $t\geq T$, and so $\partial L_t\cap \Ha$ is connected for every $t>T.$ Thus, for every $t>T,$ 
$\partial L_t\cap \Ha$ is an analytic curve that connects two points $a_t$ and $b_t$ on the real axis, with $\supp \mu_0 \subset (a_t,b_t)$.\\
Let now $f_t(a_t)$ and $f_t(b_t)$ denote the continuous extension of $f_t$ to the points $a_t$ and $b_t.$ \\
From \eqref{ele} we know that $M_t(z)=M_0(f_t^{-1}(z))$, and so $M_t(z)$ can be extended analytically in a neighbourhood of  every $x\in(f_t(a_t), f_t(b_t)).$\\

Now we come to the Loewner equation for $g_t,$ namely
$$\dot{g}_t(z) = M_t(g_t(z)), \quad g_0(z)=z\in\Ha.$$
Fix some $t_0> T$ and let $z_0\in\Ha$ be a point such that 
$$\text{$g_t(z_0)\in \Ha$ for every $t<t_0$ and $\Im(g_t(z_0))\to0$ for $t\uparrow t_0$.} \qquad (**)$$
Then $z_0\in \partial K_{t_0}.$ As $t_0> T,$ the support of $\mu_{t_0}$ is the bounded interval $I_{t_0}=[f_t(a_t), f_t(b_t)]$. When $t\uparrow t_0$, $g_t(z)$ approaches $I_{t_0}$. From Lemma \ref{2} we know that the boundary points of this interval correspond to two real values, i.e. $g_t(\hat{a})=f_t(a_t)$ and $g_t(\hat{b})=f_t(b_t)$ for some $\hat{a}<\hat{b}.$ So $g_t(z_0)$ hits the interior of $I_t$ and since $M_t(z)$ can be extended there analytically, we can also extend $g_{t_0}(z_0)$ analytically to a neighbourhood of $z_0.$\\
Analogously to equation \eqref{simpler}, we have $\Im(M_{t_0}(g_{t_0}(z_0)))<0$ and so $g_{t}(z_0)$ belongs to the lower half-plane when $t>t_0$. We conclude that $(**)$ is equivalent to $z_0\in \partial K_{t_0}.$ \\
Consequently, $\partial K_{t_0}\cap \Ha$ is an analytic curve for $t_0>T.$
\end{proof}

\begin{remark} It is worth noting that the solution $g_t$ of the Burgers-Loewner system \eqref{Loewner2}, \eqref{burgers2} can be calculated by solving one ordinary differential equation only.\\
Indeed, let $z\in\Ha$. As we have seen in Section \ref{short}, we can write $M_t(g_t(z))=M_0(z_0(t))$ for some $z_0(t)=:h_t(z)$ that satisfies
\begin{equation}\label{uno} g_t(z) = h_t(z) + 2t M_t(g_t(z)) = h_t(z) + 2tM_0(h_t(z)),\quad h_0(z)=z. \end{equation}
Now we differentiate the last equation with respect to $t$. As $\dot{g}_t(z)=M_t(g_t(z))=M_0(h_t(z))$, we obtain a differential equation for $h_t(z),$ namely
\begin{equation}\label{Andi} \frac{d}{dt}h_t(z) = \frac{-M_0(h_t(z))}{1+2tM_0'(h_t(z))},\quad h_0(z)=z. \end{equation}
Note that the denominator is always $\not=0$ as from \eqref{eac} $\Im(M_0'(z))\not=0$ for all $z\in\Ha$.
\end{remark}

\section{Example and remarks}\label{outlook}

\begin{example}
Assume that $\mu=\delta_0.$ Burgers' equation \eqref{burgers} can be solved explicitly in this case. For $t\geq0$, define
 $$M_t:\Ha\to\C, \quad M_t(z) = \frac{4}{z+\sqrt{z^2-16t}},$$
where we choose the holomorphic branch of the square root such that $\sqrt{-1}=i.$ It can be easily seen that $-M_t$ maps $\Ha$ into $\Ha$ and that
$\lim_{y\to\infty}y\cdot \Im(-M_t(iy))=2$. Thus, $M_t$ has the form 
$$M_t=\int_{\R}\frac{2\mu_t(du)}{z-u},$$
where $\mu_t$ is a probability measure (see \cite[Section 1]{MR1201130}).\\
A simple calculation shows that $M_t$ satisfies \eqref{burgers} and that $M_0(z)=\frac2{z}=\int_\R\frac{2\mu(du)}{z-u}.$ In particular, we obtain
\begin{equation}
\label{supp}\supp(\mu_t) = [-4\sqrt{t}, 4\sqrt{t}].
\end{equation}
Denote with $g_t$ the solution to \eqref{Loewner1}.  It can easily be checked that for every $c>0,$ the family $g_{c^2 t}(c\cdot z)$ also satisfies \eqref{Loewner1} with the same probability measures $\mu_t.$ Thus, if $g_t$ maps the domain $\Ha\setminus K_t$ conformally onto $\Ha,$ then $K_{c^2 t} = c\cdot K_t,$ or $K_{t} = \sqrt{t}\cdot K_1.$ Figure \ref{figure1} shows a numerical approximation of $K_1$.\\
We can also compute $g_t(z)$ explicitly. Equation \eqref{uno} and \eqref{Andi} become
\begin{equation}\label{uno2} g_t(z) = h_t(z) + \frac{4t}{h_t(z)}, \end{equation}
\begin{equation}\label{Andi2} \frac{d}{dt}h_t(z) = \frac{-2/h_t(z)}{1-4t/h_t(z)^2},\quad h_0(z)=z. \end{equation}
The solution to \eqref{Andi2} is given by
\begin{equation}\label{due} h_t(z) = i \cdot \sqrt{\frac{4t}{PL(-4t/z^2)}},\end{equation}
where $PL$ denotes the principal branch of the product logarithm. More precisely: the function $z\mapsto -4t/z^2$ maps $\Ha$ onto $\C\setminus (-\infty, 0].$ $PL$ can be defined in this domain as the branch of the inverse function $w\mapsto w\cdot e^w$ with $PL(e)=1.$ Then $PL$ maps $\C\setminus (-\infty, 0]$ into itself and we chose the square root such that $\sqrt{1}=1,$ i.e. $z\mapsto\sqrt{\frac{4t}{PL(-4t/z^2)}}$ maps $\Ha$ into the right half-plane. Finally, $h_t(z)\in \Ha$ for every $z\in \Ha$ and $t\geq 0.$\\
The value $h_0(z)$ is defined as a limit: $$h_0(z)=\lim_{t\to 0}i \cdot \sqrt{\frac{4t}{PL(-4t/z^2)}}=i \cdot \sqrt{\frac{4}{-4PL'(0)/z^2}}=i\cdot \sqrt{-z^2}=i \cdot z/i = z.$$
Note that $\sqrt{-z^2}=z/i$ for $z\in\Ha$ according to our choice of the square root branch.\\
It can be easily verified that $h_t$ solves \eqref{Andi2}.
Consequently, $g_t(z)$ is given by combining \eqref{uno2} and \eqref{due}. Finally, we can show that
 $$\overline{K_t}\cap \R=[-2\sqrt{et}, 2\sqrt{et}]$$
by recalling \eqref{supp} and verifying $g_t(\pm 2 \sqrt{et})=\pm 4\sqrt{t}.$ 
\hfill $\bigstar$
\end{example}

\begin{figure}[h] \label{figure1}
    \centering
   \includegraphics[width=140mm]{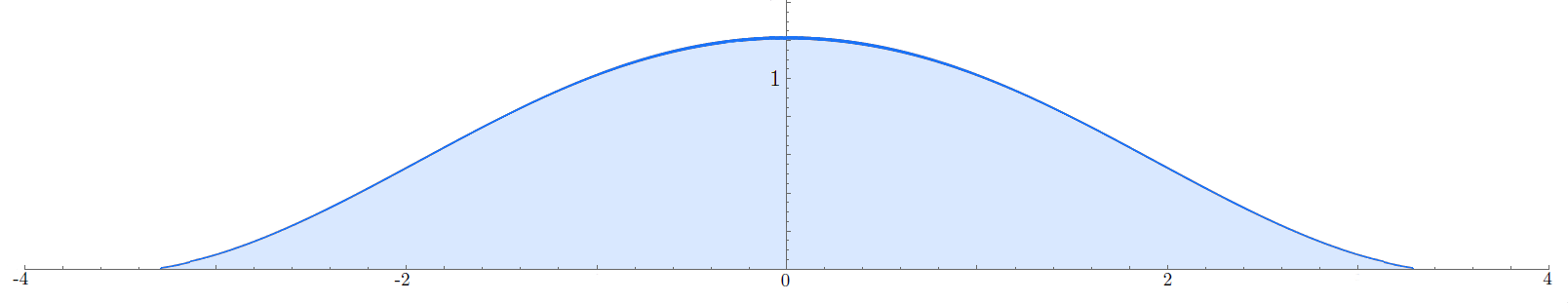}
\caption{The hull $K_1,$ which satisfies $\overline{K_1}\cap\R = [-2\sqrt{e}, 2\sqrt{e}] \approx [-3.3, 3.3].$}
 \end{figure}

\begin{remark} A natural question is to ask for the limit behaviour of multiple SLE described by the more general equation \eqref{multi2}. For each $l\in\N,$ we choose numbers $\lambda_{l,1},...,\lambda_{l,N_l}$ from $(0,1)$ which sum up to $1$. Now define the random measure $\mu_t^l=\sum_{k=1}^{N_l}\lambda_{l,k}\delta_{V^l_k(t)}.$ Then the Loewner equation has the form 
$$\dot{g}_t^l = \int_{\R}\frac{2\mu^t_l(du)}{g_t^l-u}.$$ Under which conditions does the limit for $l\to\infty$ exist?
\end{remark}

\begin{remark} It might be of interest to study variations of \eqref{burgers2} in the context of Loewner theory. In \cite{MR1217451}, e.g., the authors derive the more general equation 
$$\frac{\partial M_t(z)}{\partial t}=(\theta z-2 \frac{\partial M_t(z)}{\partial z})M_t(z)+\theta M_t(z), \quad
M_0(z)=\int_\R\frac{2\mu_0(du)}{z-u},$$
 where, again, $M_t(z)=\int_\R\frac{2\mu_t(du)}{z-u}$ for a probability measure $\mu_t.$
In this case, the limit behaviour of $M_t$ has the following analogue to \eqref{limitbehaviour}: $\mu_t$ converges for $t\to\infty$ to the Wigner semicircle measure whose density is given by $\frac{2}{\pi R^2}\sqrt{R^2-x^2}$, $-R \leq x \leq R,$ with $R=\sqrt{\frac{8}{\theta}};$ see \cite[Section 5]{MR1217451}.
\end{remark}

\begin{remark} Conformal slit mappings of the form $g:\Ha\setminus \gamma\to \Ha,$ where $\gamma$ is a simple curve don't have a straightforward generalization to the higher dimensional setting of biholomorphic mappings on, say, the Euclidean unit ball $\B_n\subset \C^n$, in the sense that $\B_n$ minus a simple curve cannot be mapped onto $\B_n$ biholomorphically for $n\geq 2$. However, the limit equations \eqref{Loewner2} and \eqref{burgers2} can be generalized because of their simple form. As an example, let $\Ha_n$ be the Siegel upper half-space $\Ha_n = \{(z_1,\tilde{z})\in \C^n \,|\, \Im(z_1)> |\tilde{z}|^2\},$ which is biholomorphic equivalent to $\B_n.$\\
Let $-M$ be an infinitesimal generator on $\Ha_n$ in the sense of \cite[Section 1]{MR2887104}. Let $M_t$ be the solution to 
$$ \frac{dM_t(z)}{dt} = -2\cdot DM_t(z)\cdot M_t(z), \quad M_0(z)=M(z), $$
where $DM_t$ denotes the Jacobi matrix of $M_t(z)$ with respect to the $z$-variables. Provided that the solution exists and that $-M_t$ is an infinitesimal generator on $\Ha_n$ for every $t\geq 0,$ then $(z,t)\mapsto M_t(z)$ is a Herglotz vector field and we can consider the Loewner equation $\dot{g}_t=M_t(g_t(z)), g_0(z)=z$ (see again \cite[Section 1]{MR2887104}).
\end{remark}

\providecommand{\bysame}{\leavevmode\hbox to3em{\hrulefill}\thinspace}
\providecommand{\MR}{\relax\ifhmode\unskip\space\fi MR }
\providecommand{\MRhref}[2]{%
  \href{http://www.ams.org/mathscinet-getitem?mr=#1}{#2}
}
\providecommand{\href}[2]{#2}

{\small
\emph{Andrea del Monaco}: 	Universit\`{a} di Roma ``Tor Vergata'', 00133 Roma, Italy.\\
     Email:         delmonac@mat.uniroma2.it  \\
\emph{Sebastian Schlei\ss inger}: Universit\`{a} di Roma ``Tor Vergata'', 00133 Roma, Italy.\\
	Email:	schleiss@mat.uniroma2.it
}

\end{document}